\theoremstyle{plain}
\newtheorem{thm}{Theorem}[section]
\newtheorem{ex}{Example}[section]
\newtheorem{conj}{Conjecture}[section]
\theoremstyle{remark}
\newtheorem{remark}{Remark}[section]
\renewcommand{\Re}{\mathrm{Re}}
\renewcommand{\Im}{\mathrm{Im}}
\title
[A sufficient condition for $p$-valently harmonic functions]
{A sufficient condition for\\
\lowercase{$p$}-valently harmonic functions}
\author{Toshio Hayami}
\address{Toshio Hayami \newline
Department of Mathematics, \newline
Kinki University \newline
Higashi-Osaka, Osaka 577-8502, \newline
Japan}
\email{ha\_ya\_to112@hotmail.com}
\subjclass[2010]{Primary 30C45, Secondary 58E20.}
\keywords{Harmonic function, multivalent function, univalent function.}
\date{}
\begin{document}

\begin{abstract}
For normalized harmonic functions $f(z)=h(z)+\overline{g(z)}$ in the open unit disk $\mathbb{U}$, a sufficient condition on $h(z)$ for $f(z)$ to be $p$-valent in $\mathbb{U}$ is discussed. Moreover, some interesting examples and images of $f(z)$ satisfying the obtained condition are enumerated.
\end{abstract}

\maketitle

\section{Introduction and Definitions}

\

For a fixed $p$ $(p=1,2,3,\cdots)$, a meromorphic function $f(z)$ in a domain $\mathbb{D}$ is said to be $p$-valent (or multivalent of order $p$) in $\mathbb{D}$ if for each $w_0$ (infinity included) the equation $f(z)=w_0$ has at most $p$ roots in $\mathbb{D}$ where the roots are counted in accordance with their multiplicity and if there is some $w_1$ such that the equation $f(z)=w_1$ has exactly $p$ roots in $\mathbb{D}$. In particular, $f(z)$ is said to be univalent (one-to-one) in $\mathbb{D}$ when $p=1$.

A complex-valued harmonic function $f(z)$ in $\mathbb{D}$ is given by $f(z)=h(z)+\overline{g(z)}$ where $h(z)$ and $g(z)$ are analytic in $\mathbb{D}$. We call $h(z)$ and $g(z)$ the analytic part and co-analytic part of $f(z)$, respectively. A necessary and sufficient condition for $f(z)$ to be locally univalent and sense-preserving in $\mathbb{D}$ is $|h'(z)|>|g'(z)|$ for all $z\in \mathbb{D}$ (see \cite{CS} or \cite{L}). The theory and applications of harmonic functions are stated in a book due to Duren \cite{D}. Let $\mathcal{H}(p)$ denote the class of functions $f(z)$ of the from
\begin{equation} \label{deff(z)}
f(z)=h(z)+\overline{g(z)}=z^p+\sum\limits_{n=p+1}^{\infty}a_n z^n+\overline{\sum\limits_{n=p}^{\infty}b_n z^n}
\end{equation}
which are harmonic in the open unit disk $\mathbb{U}=\left\{z\in \mathbb{C}:|z|<1\right\}$. We next denote by $\mathcal{S}_{\mathcal{H}}(p)$ the class of functions $f(z)\in \mathcal{H}(p)$ which are $p$-valent and sense-preserving in $\mathbb{U}$. Then, we say that $f(z)\in \mathcal{S}_{\mathcal{H}}(p)$ is a $p$-valently harmonic function in $\mathbb{U}$.

In the present paper, we discuss a sufficient condition about $h(z)$ for $f(z)\in \mathcal{H}(p)$ given by (\ref{deff(z)}), satisfying
\begin{equation}
g'(z)=z^{m-1}h'(z)
\end{equation}
for some $m$ $(m=2,3,4,\cdots)$, to be in the class $\mathcal{S}_{\mathcal{H}}(p)$. In this case, we can write
\begin{equation}
f(z)=h(z)+\overline{z^{m-1}h(z)-(m-1)\int_{0}^{z}\zeta^{m-2}h(\zeta)d\zeta}
\end{equation}
which means that $f(z)$ is well defined if $h(z)$ is given.

\

\section{Main Result}

\

Our result is contained in

\begin{thm} \label{thm1}
Let $h(z)=z^p+\sum\limits_{n=p+1}^{\infty}a_n z^n$ be analytic in the closed unit disk $\overline{\mathbb{U}}=\left\{z\in \mathbb{C}:|z|\leqq 1\right\}$ with $H(z)=h'(z)/z^{p-1}\neq 0\ \ (z\in \overline{\mathbb{U}})$ and let
\begin{equation} \label{defF(t)}
F(t)=(2p+m-1)t+2\arg\left(H(e^{it})\right)\qquad (-\pi\leqq t<\pi)
\end{equation}
for some $m$ $(m=2,3,4,\cdots)$. If for each $k\in K=\left\{0,\pm 1,\pm 2,\cdots,\pm \left[\frac{2p+m+1}{2}\right]\right\}$ where $[\ ]$ is the Gauss symbol, the equation
\begin{equation} \label{eqF(t)}
F(t)=2k\pi
\end{equation}
has at most a single root in $\left[\left.-\pi,\pi\right)\right.$ and for all $k\in K$ there exist exactly $2p+m-1$ such roots in $\left[\left.-\pi,\pi\right)\right.$, then the harmonic function $f(z)=h(z)+\overline{g(z)}$ with $g'(z)=z^{m-1}h'(z)$ belongs to the class $\mathcal{S}_{\mathcal{H}}(p)$ and maps $\mathbb{U}$ onto a domain surrounded by $2p+m-1$ concave curves with $2p+m-1$ cusps.
\end{thm}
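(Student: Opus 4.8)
The plan is to combine a Jacobian computation, the argument principle for sense-preserving harmonic mappings, and a careful description of the boundary curve $\gamma(t)=f(e^{it})$.

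I would first check that $f$ is sense-preserving and continuous on $\overline{\mathbb U}$. Since $g'(z)=z^{m-1}h'(z)$, the dilatation of $f$ is $\omega=g'/h'=z^{m-1}$ and the Jacobian is $J_f(z)=|h'(z)|^2\bigl(1-|z|^{2(m-1)}\bigr)$, which is positive on $\mathbb U\setminus\{0\}$ because $h'(z)=z^{p-1}H(z)$ with $H$ non-vanishing; near the origin $f(z)=z^{p}+O(|z|^{p+1})$, so $f$ is an orientation-preserving map of $\mathbb U$ which is $p$-to-$1$ at $0$, and $f\in\mathcal H(p)$ since $g$ begins with the term $\tfrac{p}{p+m-1}z^{p+m-1}$. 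As $h$, hence $g$, is analytic on a neighbourhood of $\overline{\mathbb U}$, $f$ extends continuously there. By the argument principle for such mappings (see \cite{D}), for every $w_0\notin\Gamma:=f(\partial\mathbb U)$ the number $N_f(w_0)$ of solutions of $f(z)=w_0$ in $\mathbb U$, counted with multiplicity, equals the winding number $n(\Gamma,w_0)\geqq0$; and since $f(0)=0$ with local degree $p$ we already have $n(\Gamma,w_0)\geqq p$ near $0$. Hence the theorem reduces to the estimate $n(\Gamma,w_0)\leqq p$ for all admissible $w_0$, which gives $N_f(0)=p$ and $f\in\mathcal S_{\mathcal H}(p)$.

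Next I would analyse $\gamma$. Using $h'(e^{it})=e^{i(p-1)t}H(e^{it})$ and $\overline{g'(e^{it})}=e^{-i(m-1)t}\overline{h'(e^{it})}$, a direct computation gives
\[
\gamma'(t)=ie^{it}h'(e^{it})-ie^{-imt}\,\overline{h'(e^{it})}=-2\,\bigl|H(e^{it})\bigr|\,\sin\!\Bigl(\tfrac{F(t)}{2}\Bigr)\,e^{-i(m-1)t/2},
\]
with $F$ as in \eqref{defF(t)}. So $\gamma'(t)=0$ exactly when $\sin(F(t)/2)=0$, i.e. $F(t)=2k\pi$; since $H\neq0$ on $\overline{\mathbb U}$, $\arg H(e^{it})$ has total increment $0$ over $[-\pi,\pi)$, so $F(\pi^-)-F(-\pi)=2(2p+m-1)\pi$ and only the indices $k\in K$ can occur. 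The hypotheses — each level $2k\pi$ hit at most once, $2p+m-1$ times in all — then force all these crossings to be transversal, so $\gamma$ has exactly $2p+m-1$ cusps $\gamma(t_1),\dots,\gamma(t_{2p+m-1})$. On each arc $(t_j,t_{j+1})$ the sign of $\sin(F(t)/2)$ is constant, so $\gamma'(t)/|\gamma'(t)|=\pm e^{-i(m-1)t/2}$ turns strictly monotonically (clockwise, since $m\geqq2$); hence $\gamma$ restricted to $[t_j,t_{j+1}]$ is a concave arc and the unit tangent reverses at each $t_j$. This already yields the geometric conclusion: $f(\mathbb U)$ is bounded by $2p+m-1$ concave curves joined at $2p+m-1$ cusps.

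It remains to prove $n(\Gamma,w_0)\leqq p$, and this is where I expect the real work to lie. In the model case $h(z)=z^{p}$ it is transparent: there $g(z)=\tfrac{p}{p+m-1}z^{p+m-1}$, so
\[
f(e^{it})-w_0=e^{-i(p+m-1)t}\Bigl(e^{i(2p+m-1)t}-w_0\,e^{i(p+m-1)t}+\tfrac{p}{p+m-1}\Bigr)=e^{-i(p+m-1)t}\,Q(e^{it}),
\]
with $Q$ a polynomial of degree $2p+m-1$ and $Q(0)\neq0$, whence by the classical argument principle
\[
n(\Gamma,w_0)=\frac{1}{2\pi}\Delta_{[-\pi,\pi]}\arg\!\bigl(f(e^{it})-w_0\bigr)=Z_Q(w_0)-(p+m-1)\leqq(2p+m-1)-(p+m-1)=p,
\]
where $Z_Q(w_0)$ is the number of zeros of $Q$ in $\mathbb U$, with equality at $w_0=0$. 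For general $h$ the same bound should come from the structure above: $n(\Gamma,\cdot)=0$ on the unbounded component of $\mathbb C\setminus\Gamma$, crossing one of the $2p+m-1$ concave arcs changes $n(\Gamma,\cdot)$ by $\pm1$, and the root condition on $F$ forces all $2p+m-1$ cusps to be of the same (outward) type, so that $\gamma$ runs in a single rotational sense; a Whitney--Graustein-type count for such a curve — whose tangent-line direction winds $m-1$ times while the curve has $2p+m-1$ cusps — gives maximal winding $\tfrac12\bigl((2p+m-1)-(m-1)\bigr)=p$. Turning this local picture into the rigorous inequality $n(\Gamma,w_0)\leqq p$, using the hypothesis on the roots of $F(t)=2k\pi$ in full to exclude the extra windings that $2p+m-1>p$ unconstrained cusps could otherwise support, is the main obstacle; once it is in place, $N_f(0)=p$ and $N_f(w_0)\leqq p$ everywhere, and the theorem follows.
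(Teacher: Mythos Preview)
Your approach is sound and in several places more explicit than the paper's, but the methods diverge.

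For the concavity of the boundary arcs the paper expands $\varphi''(t)\overline{\varphi'(t)}$ directly and obtains
\[
\Im\bigl(\varphi''(t)\overline{\varphi'(t)}\bigr)=(m-1)\,|h'(z)|^{2}\,\Re\!\left\{\overline{z}^{\,m+1}\Bigl(\frac{\overline{h'(z)}}{|h'(z)|}\Bigr)^{2}-1\right\}\leqq 0\qquad(z=e^{it}),
\]
so the signed curvature of the boundary curve is everywhere nonpositive. Your closed form $\gamma'(t)=-2\bigl|H(e^{it})\bigr|\sin\!\bigl(F(t)/2\bigr)\,e^{-i(m-1)t/2}$ reaches the same conclusion more transparently: the unit tangent is $\pm e^{-i(m-1)t/2}$, which rotates strictly clockwise for $m\geqq 2$ and flips sign at each zero of $\sin(F(t)/2)$. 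That single formula also yields the cusp equation $F(t)=2k\pi$, the transversality of the crossings, and the alternation of the tangent at the cusps in one stroke, whereas the paper treats the vanishing of $\varphi'$ and the curvature sign as two separate computations. Your preliminary Jacobian/sense--preserving check and the invocation of the argument principle for harmonic maps are likewise absent from the paper.

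For the passage from the boundary description to $p$-valence the paper is \emph{briefer} than you, not fuller: after the curvature inequality it simply asserts that ``by the help of a simple geometrical observation'' the union of $2p+m-1$ concave arcs forms a simple curve, and then declares $f$ to be $p$-valent without further argument. Your programme --- reduce to the winding-number bound $n(\Gamma,w_0)\leqq p$, verify it explicitly in the model case $h(z)=z^{p}$ via the factorisation $f(e^{it})-w_0=e^{-i(p+m-1)t}Q(e^{it})$, and then appeal to the tangent-winding versus cusp count for the general case --- is precisely the way to make that step rigorous. The ``main obstacle'' you flag is therefore not a defect of your route; it is a gap the paper's proof shares, and your sketch already goes further toward closing it than the paper does.
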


\begin{proof}
We first consider the function $\varphi(t)$ defined as
\begin{equation} \label{phi(t)}
\varphi(t)=f(e^{it})=h(e^{it})+\overline{g(e^{it})}\qquad (-\pi\leqq t<\pi).
\end{equation}
Supposing that
\begin{equation}
\varphi'(t)=i\left(zh'(z)-\overline{z}\overline{g'(z)}\right)=iz\left(h'(z)-\overline{z}^{m+1}\overline{h'(z)}\right)=0\qquad (z=e^{it}),
\end{equation}
we need the following equation
\begin{equation}
z^{m+1}\dfrac{h'(z)}{\overline{h'(z)}}=1\qquad \left(\overline{z}=e^{-it}=\dfrac{1}{z}\right).
\end{equation}
This means that
\begin{eqnarray}
\arg\left(e^{i(m+1)t}\dfrac{h'(e^{it})}{\overline{h'(e^{it})}}\right) &=& (m+1)t+2\arg\left(e^{i(p-1)t}H(e^{it})\right) \nonumber \\
 &=& (2p+m-1)t+2\arg\left(H(e^{it})\right)=2k\pi
\end{eqnarray}
for some $k\in K$ which gives us the equation (\ref{eqF(t)}). In consideration of the assumption of the theorem, there are $2p+m-1$ distinct roots on the unit circle $\partial \mathbb{U}=\left\{z\in \mathbb{C}:|z|=1\right\}$ and they divide $\partial \mathbb{U}$ onto $2p+m-1$ arcs. Moreover, since $g''(z)=(m-1)z^{m-2}h'(z)+z^{m-1}h''(z)$, we obtain that
\begin{eqnarray}
\varphi''(t) &=& -\left(zh'(z)+z^2 h''(z)+\overline{z}\overline{g'(z)}+\overline{z}^2\overline{g''(z)}\right) \nonumber \\
 &=& -\left(zh'(z)+z^2 h''(z)+m\overline{z}^m \overline{h'(z)}+\overline{z}^{m+1}\overline{h''(z)}\right)
\end{eqnarray}
and
\begin{eqnarray}
\varphi''(t)\overline{\varphi'(t)} &=& -\left(zh'(z)+z^2 h''(z)+m\overline{z}^m \overline{h'(z)}+\overline{z}^{m+1}\overline{h''(z)}\right)(-i)\left(\overline{z}\overline{h'(z)}-z^m h'(z)\right) \nonumber \\
 & & \nonumber \\
 &=& i\left(-(m-1)|h'(z)|^2+z\overline{h'(z)}h''(z)-\overline{z\overline{h'(z)}h''(z)}-z^{m+1}h'(z)^2\right. \nonumber \\
 & & \left.+\overline{z^{m+1}h'(z)^2}+(m-1)\overline{z}^{m+1}\overline{h'(z)}^2-z^{m+2}h'(z)h''(z)+\overline{z^{m+2}h'(z)h''(z)}\right) \nonumber \\
 & & \nonumber \\
 &=& i\left\{(m-1)\left(\overline{z}^{m+1}\overline{h'(z)}^2-|h'(z)|^2\right)\right. \nonumber \\
 & & \left.+2i\Im\left(z\overline{h'(z)}h''(z)-z^{m+1}h'(z)^2-z^{m+2}h'(z)h''(z)\right)\right\}.
\end{eqnarray}
This leads us that
\begin{equation}
\Im\left(\varphi''(t)\overline{\varphi'(t)}\right)=(m-1)|h'(z)|^2\Re\left\{\overline{z}^{m+1}\left(\dfrac{\overline{h'(z)}}{|h'(z)|}\right)^2-1\right\}\leqq 0
\end{equation}
for all $z=e^{it}$. Therefore, it follows that
\begin{equation}
\Im\left(\dfrac{\varphi''(t)}{\varphi'(t)}\right)=\dfrac{1}{|\varphi'(t)|^2}\Im\left(\varphi''(t)\overline{\varphi'(t)}\right)\leqq 0
\end{equation}
which shows that $\varphi(t)$ maps $[-\pi,\pi)$ onto a union of $2p+m-1$ concave curves. By the help of a simple geometrical observation, we know that the image of $\partial \mathbb{U}$ as a union of $2p+m-1$ concave arcs is a simple curve. Thus, $f(z)$ is $p$-valent in $\mathbb{U}$ and the image $f(\mathbb{U})$ is a domain surrounded by $2p+m-1$ concave curves with $2p+m-1$ cusps.
\end{proof}

\begin{remark}
If we take $p=1$ in Theorem \ref{thm1}, then we readily arrive at the univalence criterion for harmonic functions due to Hayami and Owa \cite[Theorem 2.1]{HO} (see also \cite{M1}).
\end{remark}

\

\section{Some Illustrative Examples and Image Domains}

\

In this section, we discuss functions $f(z)=h(z)+\overline{g(z)}$ satisfying the conditions of Theorem \ref{thm1} and their image domains.

\begin{ex}
Let $h(z)=z^p$. Then we easily see that the equation $(\ref{eqF(t)})$ becomes
\begin{equation}
(2p+m-1)t=2k\pi\qquad \left(k=0,\pm 1,\pm 2,\cdots,\pm \left[\dfrac{2p+m+1}{2}\right]\right)
\end{equation}
which satisfies the conditions of Theorem \ref{thm1}. Hence, the function
\begin{equation}
f(z)=h(z)+\overline{g(z)}=z^p+\overline{\dfrac{p}{p+m-1}z^{p+m-1}}\qquad (g'(z)=z^{m-1}h'(z))
\end{equation}
belongs to the class $\mathcal{S}_{\mathcal{H}}(p)$ and it maps $\mathbb{U}$ onto a domain surrounded by $2p+m-1$ concave curves with $2p+m-1$ cusps. Taking $p=2$ and $m=4$ for example, we know that the function
\begin{equation}
f(z)=z^2+\dfrac{2}{5}\overline{z}^5
\end{equation}
is a $2$-valently harmonic function in $\mathbb{U}$ and it maps $\mathbb{U}$ onto the domain surrounded by $7$ concave curves with $7$ cusps as shown in Figure 1.
\end{ex}

\begin{figure*}[h]
\begin{center}
\includegraphics[width=7cm,clip]{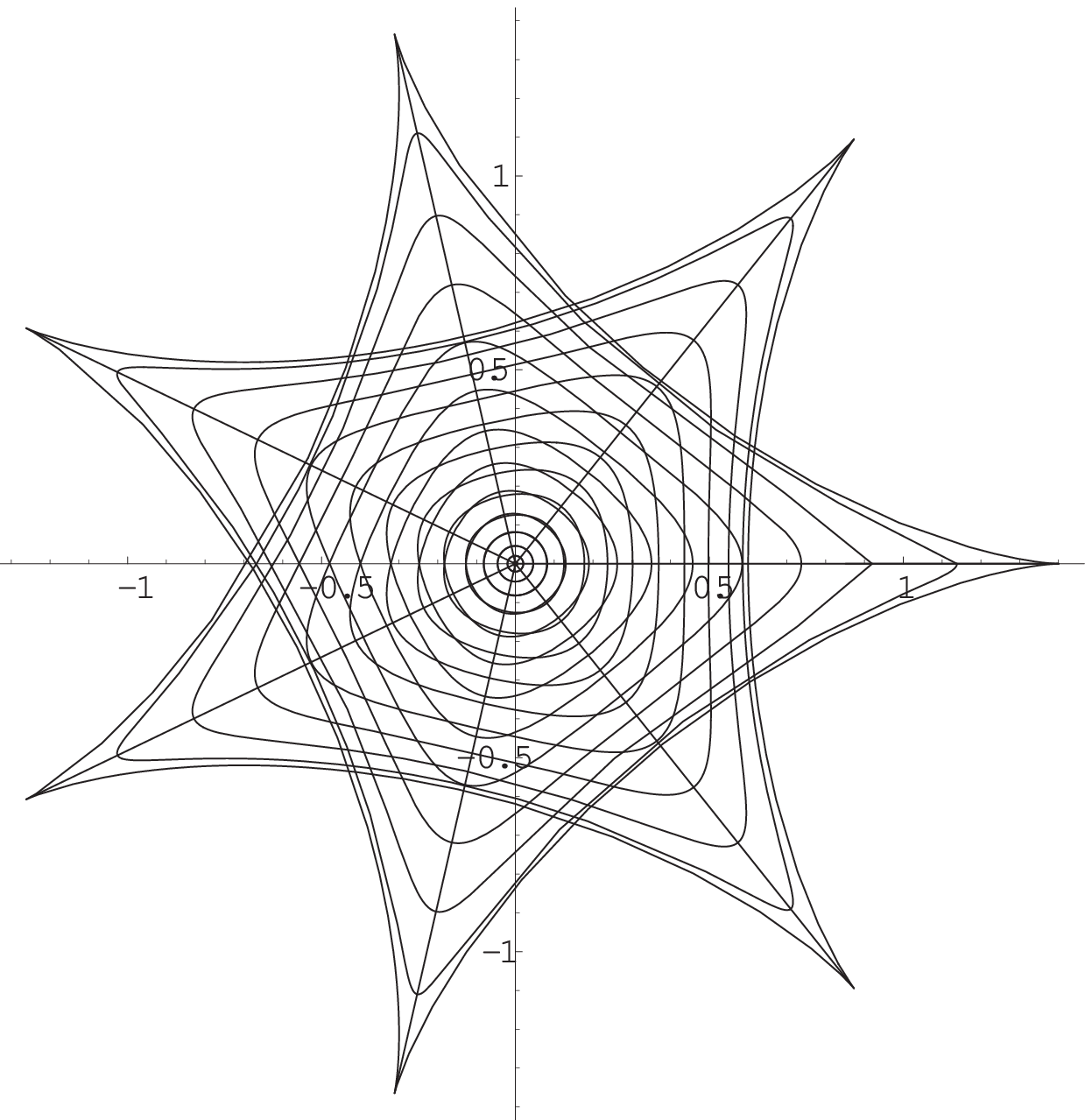}
\end{center}
\caption{The image of $f(z)=z^2+\dfrac{2}{5}\overline{z}^5$.}
\end{figure*}

\begin{remark} \label{rem}
Since we can rewrite $F(t)$ given by $(\ref{defF(t)})$ as follows:
\begin{equation}
F(t)=(2p+m-1)t+2\Im\left(\log H(e^{it})\right),
\end{equation}
we have that
\begin{eqnarray}
F'(t) &=& 2p+m-1+2\Re\left(\dfrac{e^{it}h''(e^{it})}{h'(e^{it})}-(p-1)\right) \nonumber \\
 & & \nonumber \\
 &=& m+1+2\Re\left(\dfrac{e^{it}h''(e^{it})}{h'(e^{it})}\right)
\end{eqnarray}
which implies that $F(t)$ is increasing if
\begin{equation}
\Re\left(1+\dfrac{zh''(z)}{h'(z)}\right)>-\dfrac{m-1}{2}\qquad (z\in \mathbb{U}).
\end{equation}
\end{remark}

\

By the above remark, we derive the following exapmle.
\begin{ex}
Let $h(z)=z^p+\dfrac{c}{p+1}z^{p+1}$ $\left(|c|\leqq p-\dfrac{2p}{2p+m+1}\right)$. Then the equation $(\ref{eqF(t)})$ becomes
\begin{equation}
F(t)=(2p+m-1)t+2\arg\left(p+ce^{it}\right).
\end{equation}
Noting
\begin{eqnarray}
\Re\left(1+\dfrac{zh''(z)}{h'(z)}\right) &=& p+1-\Re\left(\dfrac{p}{p+cz}\right) \nonumber \\
 & & \nonumber \\
 &>& p+1-\dfrac{p}{p-|c|}\ \geqq -\dfrac{m-1}{2}\qquad (z\in \mathbb{U}),
\end{eqnarray}
\begin{equation}
F(-\pi)=-(2p+m-1)\pi-2\arctan\left(\dfrac{|c|\sin \theta}{p-|c|\cos \theta}\right)
\end{equation}
and
\begin{equation}
F(\pi)=(2p+m-1)\pi-2\arctan\left(\dfrac{|c|\sin \theta}{p-|c|\cos \theta}\right)
\end{equation}
where $0\leqq \theta=\arg(c)<2\pi$, we see that $F(t)$ satisfies the conditions of Theorem \ref{thm1}. Hence, the function
\begin{equation}
f(z)=h(z)+\overline{g(z)}=z^p+\dfrac{c}{p+1}z^{p+1}+\overline{\dfrac{p}{p+m-1}z^{p+m-1}+\dfrac{c}{p+m}z^{p+m}}
\end{equation}
belongs to the class $\mathcal{S}_{\mathcal{H}}(p)$ and it maps $\mathbb{U}$ onto a domain surrounded by $2p+m-1$ concave curves with $2p+m-1$ cusps. Putting $p=3$, $m=2$ and $c=i$ $\left(|c|=1<\dfrac{7}{3}\right)$, we know that the function
\begin{equation}
f(z)=z^3+\dfrac{i}{4}z^4+\dfrac{3}{4}\overline{z}^4-\dfrac{i}{5}\overline{z}^5
\end{equation}
is a $3$-valently harmonic function in $\mathbb{U}$ and it maps $\mathbb{U}$ onto the domain surrounded by $7$ concave curves with $7$ cusps as shown in Figure 2. We check only the boundary of $f(\mathbb{U})$, for the sake of simplicity.
\end{ex}

\begin{figure*}[h]
\begin{center}
\includegraphics[width=7cm,clip]{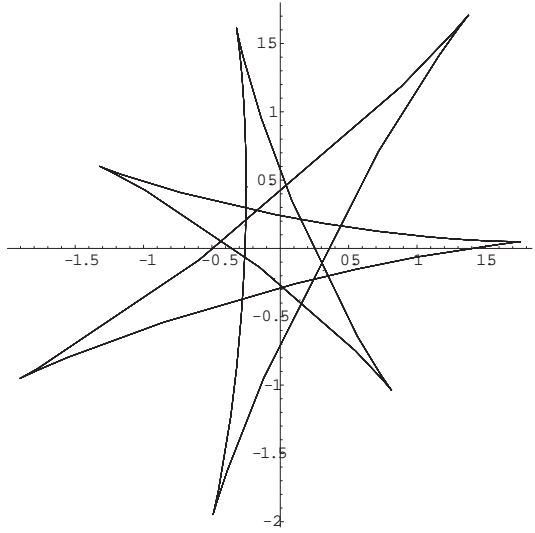}
\end{center}
\caption{The image of $f(z)=z^3+\dfrac{i}{4}z^4+\dfrac{3}{4}\overline{z}^4-\dfrac{i}{5}\overline{z}^5$.}
\end{figure*}

\

\section{Appendix}

\

The next result was conjectured by Mocanu \cite{M2} and proved by Bshouty and Lyzzaik \cite{BL}.

\begin{thm}
If $h(z)$ and $g(z)$ are analytic in $\mathbb{U}$, with $h'(0)\neq 0$, which satisfy
\begin{equation}
g'(z)=zh'(z)
\end{equation}
and
\begin{equation}
\Re\left(1+\dfrac{zh''(z)}{h'(z)}\right)>-\dfrac{1}{2}
\end{equation}
for all $z\in \mathbb{U}$, then the harmonic function $f(z)=h(z)+\overline{g(z)}$ is univalent close-to-convex in $\mathbb{U}$.
\end{thm}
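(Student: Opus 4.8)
\medskip

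The plan is to derive this theorem from Theorem \ref{thm1} with $p=1$ and $m=2$ by exhausting $\mathbb{U}$ with dilations; a direct attack through the Clunie--Sheil-Small shearing theorem seems less convenient, since the conclusion asserts only close-to-convexity, not convexity in a direction. Two a priori facts about $h$ will be used. First, $h'$ is zero-free in $\mathbb{U}$, for a zero of $h'$ at some $z_0\in\mathbb{U}$ would make $zh''(z)/h'(z)$ unbounded near $z_0$, contradicting $\Re\bigl(1+zh''(z)/h'(z)\bigr)>-\tfrac12$. Second, the Herglotz representation applied to $1+zh''(z)/h'(z)$ (which has real part greater than $-\tfrac12$ and value $1$ at $0$), followed by integration of $h''/h'$, yields
\begin{equation*}
h'(z)=h'(0)\exp\left(-3\int_{\partial\mathbb{U}}\log(1-xz)\,d\mu(x)\right)
\end{equation*}
for some probability measure $\mu$ on $\partial\mathbb{U}$; since $\Re(1-xz)>0$ for $z\in\mathbb{U},\ |x|=1$, this gives the uniform bound $\bigl|\arg\bigl(h'(z)/h'(0)\bigr)\bigr|<\tfrac{3\pi}{2}$ on $\mathbb{U}$.

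Since the hypothesis and the relation $g'=zh'$ are preserved by the substitution $f(z)\mapsto\mu\,f(\mu^2z)$ with $|\mu|=1$, as well as by positive scalings and translations of $f$, we may normalize $h$ to the form $h(z)=z+a_2z^2+\cdots$. Now fix $r\in(0,1)$ and put $h_r(z)=h(rz)/r$, so $h_r(z)=z+\cdots$ is analytic on $\overline{\mathbb{U}}$, $h_r'(z)=h'(rz)$ is zero-free on $\overline{\mathbb{U}}$, and $H_r(z)=h_r'(z)$ as $p=1$. I would then check that $h_r$ satisfies the hypotheses of Theorem \ref{thm1}. By Remark \ref{rem}, the relevant function $F_r(t)=3t+2\arg h_r'(e^{it})$ has derivative $F_r'(t)=3+2\Re\bigl(re^{it}h''(re^{it})/h'(re^{it})\bigr)$, which is positive because $re^{it}\in\mathbb{U}$ and $\Re\bigl(1+wh''(w)/h'(w)\bigr)>-\tfrac12$ there; hence $F_r$ is strictly increasing, so \eqref{eqF(t)} has at most one root for each $k$. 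Because $h_r'$ is zero-free, the continuous branch of $\arg h_r'(e^{it})$ returns to its starting value over $[-\pi,\pi)$, so $F_r$ increases by exactly $6\pi$ and meets exactly three levels $2k\pi$; and since $\bigl|\arg h_r'(-1)\bigr|<\tfrac{3\pi}{2}$, we have $F_r(-\pi)\in(-6\pi,0)$, so the range of $F_r$ lies in $(-6\pi,6\pi)$ and those three levels have $|k|\leqq 2$, i.e.\ $k\in K$. Thus Theorem \ref{thm1} applies: with $g_r$ given by $g_r'(z)=zh_r'(z)$ and $g_r(0)=0$, the map $f_r=h_r+\overline{g_r}$ lies in $\mathcal{S}_{\mathcal{H}}(1)$ and maps $\mathbb{U}$ onto a domain bounded by three concave arcs with three cusps --- a domain which is close-to-convex.

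It remains to let $r\to 1^-$. One has $h_r(z)=h(rz)/r\to h(z)$ and $g_r(z)=\int_0^z\zeta h'(r\zeta)\,d\zeta=r^{-2}g(rz)\to g(z)$ uniformly on compact subsets of $\mathbb{U}$, so $f_r\to f$ locally uniformly; and $f$ is non-constant since $h'(0)=1\neq 0$. By the Hurwitz-type theorem for sense-preserving harmonic mappings --- a locally uniform limit of univalent, sense-preserving harmonic functions is univalent or constant --- $f$ is univalent; and since close-to-convexity of the image is inherited by such limits, $f$ is univalent close-to-convex in $\mathbb{U}$.

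The step I expect to be the real obstacle is the verification in the second paragraph that the mere monotonicity of $F_r$ delivers the entire combinatorial hypothesis of Theorem \ref{thm1}, and in particular that all three level-crossings of $F_r$ are indexed by elements of $K$; this is precisely where the a priori estimate $\bigl|\arg(h'(z)/h'(0))\bigr|<\tfrac{3\pi}{2}$, hence the Herglotz representation coming from $\Re(1+zh''/h')>-\tfrac12$, is indispensable. Two subsidiary points also require attention: that close-to-convexity genuinely passes to locally uniform limits of univalent harmonic mappings, and that the curvilinear-triangle images furnished by Theorem \ref{thm1} are themselves close-to-convex.
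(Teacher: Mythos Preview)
The paper does not prove this theorem: it is quoted in the Appendix as a known result, conjectured by Mocanu \cite{M2} and proved by Bshouty and Lyzzaik \cite{BL}, and it serves only to motivate Conjecture 4.1. There is thus no proof in the paper against which to compare your proposal.

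Viewed as an independent attempt, your reduction to Theorem \ref{thm1} with $p=1$, $m=2$ via the dilations $h_r$ is carried out correctly up to the point where Theorem \ref{thm1} yields $f_r\in\mathcal{S}_{\mathcal{H}}(1)$ with image bounded by three concave arcs and three cusps; the Herglotz estimate $\bigl|\arg\bigl(h'(z)/h'(0)\bigr)\bigr|<3\pi/2$ and the check that the three level-crossings of $F_r$ are indexed by $|k|\leqq 2$ are sound. But the two points you flag at the end are genuine obstacles, not formalities. Theorem \ref{thm1} nowhere asserts close-to-convexity, and a Jordan domain bounded by three concave arcs meeting at three cusps is not \emph{a priori} close-to-convex: you would need to verify linear accessibility (equivalently, a Kaplan-type bound on the backward turning of the tangent along each arc), and the inequality $\Im\bigl(\varphi''(t)\overline{\varphi'(t)}\bigr)\leqq 0$ from the proof of Theorem \ref{thm1} does not by itself give this. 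Moreover, that proof itself rests on the sentence ``By the help of a simple geometrical observation, we know that the image of $\partial\mathbb{U}$ \ldots\ is a simple curve,'' which is heuristic; a rigorous derivation of the Bshouty--Lyzzaik theorem along your route would have to repair that step as well, and then still establish that close-to-convexity survives the passage $r\to 1^-$.
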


We immediately notice that the above theorem is closely related to Theorem \ref{thm1} and Remark \ref{rem} with $p=1$ and $m=2$. This motivates us to state

\begin{conj}
If the function $f(z)$ given by $(\ref{deff(z)})$ is harmonic in $\mathbb{U}$ which satisfies
\begin{equation}
g'(z)=z^{m-1}h'(z)
\end{equation}
and
\begin{equation}
\Re\left(1+\dfrac{zh''(z)}{h'(z)}\right)>-\dfrac{m-1}{2}\qquad (z\in \mathbb{U})
\end{equation}
for some $m$ $(m=2,3,4,\cdots)$, then $f(z)$ is $p$-valent in $\mathbb{U}$.
\end{conj}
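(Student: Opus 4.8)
The plan is to deduce the conjecture from Theorem~\ref{thm1} by combining a dilation with a Hurwitz‑type limiting argument, using Remark~\ref{rem} to turn the hypothesis on $\Re(1+zh''/h')$ into the monotonicity of $F$. First I would record two immediate consequences of the hypothesis. If $h'(z_0)=0$ for some $z_0\in\mathbb{U}$, then $zh''(z)/h'(z)$ has a pole at $z_0$ and $\Re(1+zh''/h')$ is unbounded below near $z_0$, contradicting the hypothesis; hence $h'\neq 0$ on $\mathbb{U}$. Since $m\geqq 2$ gives $|g'(z)|=|z|^{m-1}|h'(z)|<|h'(z)|$ for all $z\in\mathbb{U}$, the function $f=h+\overline{g}$ is then locally univalent and sense‑preserving in $\mathbb{U}$, and by Remark~\ref{rem} the function $F(t)$ is (weakly) increasing.

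Next I would settle the special case in which, in addition, $h$ is analytic on $\overline{\mathbb{U}}$ with $H=h'/z^{p-1}\neq 0$ on $\overline{\mathbb{U}}$ and with the strict inequality $\Re(1+zh''/h')>-\frac{m-1}{2}$ valid on all of $\overline{\mathbb{U}}$. Then $F'(t)>0$ on $[-\pi,\pi)$, so $F$ is strictly increasing; and since the only zero of $h'$ in $\overline{\mathbb{U}}$ is the one of order $p-1$ forced at the origin, the argument principle shows that $H(e^{it})$ has winding number $0$ about the origin, whence a continuous branch of $\arg H(e^{it})$ has equal values at $\pm\pi$ and $F(\pi^-)-F(-\pi)=2\pi(2p+m-1)$. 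Consequently, for every integer $k$ the equation $F(t)=2k\pi$ has at most one root in $[-\pi,\pi)$, and there are exactly $2p+m-1$ such roots altogether --- which is precisely the configuration used in the proof of Theorem~\ref{thm1}, the finite set $K$ there serving only to list the relevant levels. So Theorem~\ref{thm1} (or, if $\arg H$ should range too widely for the stated $K$, a verbatim repetition of its proof) yields $f\in\mathcal{S}_{\mathcal{H}}(p)$ in this case.

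For the general case I would dilate: for $r\in(0,1)$ put $h_r(z)=r^{-p}h(rz)=z^p+\sum_{n\geqq p+1}a_n r^{\,n-p}z^n$, analytic on $\overline{\mathbb{U}}$. One checks that $H_r(z):=h_r'(z)/z^{p-1}=H(rz)$, nonzero on $\overline{\mathbb{U}}$ because $H\neq 0$ on the compact set $\{|w|\leqq r\}\subset\mathbb{U}$, while $\Re(1+zh_r''(z)/h_r'(z))$ equals $\Re(1+wh''(w)/h'(w))$ evaluated at $w=rz$, which exceeds $-\frac{m-1}{2}$ for $|z|\leqq 1$, strictly and uniformly since $|rz|\leqq r<1$. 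Thus $h_r$ falls under the special case, so $f_r:=h_r+\overline{g_r}$ with $g_r'(z)=z^{m-1}h_r'(z)$ belongs to $\mathcal{S}_{\mathcal{H}}(p)$; in particular each $f_r$ is $p$‑valent and, as $|h_r'|>|g_r'|$ in $\mathbb{U}$, a sense‑preserving harmonic local homeomorphism of $\mathbb{U}$, so it attains every value at most $p$ times with only simple preimages. Letting $r\to 1^-$, one has $h_r\to h$, hence $h_r'\to h'$ and $g_r\to g$, hence $f_r\to f$, all locally uniformly on $\mathbb{U}$.

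It remains to pass to the limit. Since $f$ is likewise a sense‑preserving harmonic local homeomorphism of $\mathbb{U}$, its preimage sets consist of simple points, and a standard stability‑of‑local‑degree argument shows that $f$ cannot attain any value at $p+1$ distinct points: disjoint neighbourhoods on which $f$ is a homeomorphism would, under the locally uniform convergence, force some $f_r$ to attain a common nearby value at $p+1$ points, contradicting its $p$‑valence. For the reverse direction, $f(z)=z^p+O(|z|^{p+1})$ near the origin (as $m\geqq 2$ makes $g(z)=O(z^{p+m-1})$), so a Rouch\'e/degree comparison with $z\mapsto z^p$ on a small disc $\{|z|<\rho\}$ shows that $f$ attains every value $w$ with $0<|w|<\rho^p(1-C\rho)$ exactly $p$ times there. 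Hence $f$ is $p$‑valent in $\mathbb{U}$, and being sense‑preserving and of the form $(\ref{deff(z)})$ it lies in $\mathcal{S}_{\mathcal{H}}(p)$. I expect the main obstacle to be precisely this limiting step --- running the degree theory for harmonic (rather than analytic) maps carefully enough that neither the bound $p$ is violated in the limit nor any valence is lost below $p$; a secondary, more clerical, point is to confirm that the proof of Theorem~\ref{thm1} uses only the root‑count of $F$ and not the particular enumerating set $K$, so that it genuinely applies to the dilates $h_r$.
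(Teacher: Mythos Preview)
The paper offers no proof of this statement: it is posed as an open \emph{conjecture}, motivated by the Bshouty--Lyzzaik theorem (the case $p=1$, $m=2$) and by the observation in Remark~\ref{rem} that the convexity-type hypothesis forces the monotonicity of $F$ used in Theorem~\ref{thm1}. There is therefore no original argument to compare your attempt against; you are proposing to settle what the author explicitly leaves open.

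Your reduction via dilates to the setting of Theorem~\ref{thm1} is clean and the computations $H_r(z)=H(rz)$ and $zh_r''/h_r'=(rz)h''(rz)/h'(rz)$ are correct. There is, however, a concrete slip: for $p\geqq 2$ one has $h'(0)=0$, so the Jacobian $|h'|^2-|g'|^2$ vanishes at the origin and neither $f$ nor the dilates $f_r$ are local homeomorphisms on all of $\mathbb{U}$. Your claim that ``$f$ is likewise a sense-preserving harmonic local homeomorphism of $\mathbb{U}$'' and that all preimage points are simple is thus false at $z=0$. This does not look fatal --- the value $w_0=0$ can be handled separately using the local degree-$p$ behaviour you already invoke for the lower bound --- but as written the upper-bound step has a gap. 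Beyond this, the limiting step you yourself flag is indeed the crux: you would need a rigorous Hurwitz-type stability result for $p$-valent sense-preserving harmonic maps, and the whole scheme also leans on Theorem~\ref{thm1}, whose own passage from the boundary analysis to $p$-valence is justified in the paper only by a ``simple geometrical observation'' that is not spelled out. Your outline is reasonable, but it remains a sketch toward an open problem rather than a completed proof.
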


\

Finally, in view of the process of proving Theorem \ref{thm1}, we obtain the following interesting example.
\begin{ex}
If we consider special functions $h(z)$ and $g(z)$ given by
\begin{equation}
h'(z)=\dfrac{pz^{p-1}}{1+z^{2p+m-1}}\quad and\quad g'(z)=\dfrac{pz^{p+m-2}}{1+z^{2p+m-1}}\qquad \left(g'(z)=z^{m-1}h'(z)\right),
\end{equation}
then the function $\varphi(t)$ given by $(\ref{phi(t)})$ satisfies
\begin{eqnarray}
\varphi'(t) &=& iz\left(h'(z)-\overline{z^{m+1}h'(z)}\right) \nonumber \\
 & & \nonumber \\
 &=& iz\left(\dfrac{pz^{p-1}}{1+z^{2p+m-1}}-\dfrac{p\overline{z}^{p+m}}{1+\overline{z}^{2p+m-1}}\right) \nonumber \\
 & & \nonumber \\
 &=& iz\left(\dfrac{pz^{p-1}}{1+z^{2p+m-1}}-\dfrac{pz^{p-1}}{1+z^{2p+m-1}}\right)=0\qquad (z=e^{it})
\end{eqnarray}
for any $z^{2p+m-1}=e^{i(2p+m-1)t}\neq -1$. Thus, the function
\begin{equation} \label{specialf(z)}
f(z)=h(z)+\overline{g(z)}=\int_{0}^{z}\dfrac{p\zeta^{p-1}}{1+\zeta^{2p+m-1}}d\zeta+\overline{\int_{0}^{z}\dfrac{p\zeta^{p+m-2}}{1+\zeta^{2p+m-1}}d\zeta}
\end{equation}
is a member of the class $\mathcal{S}_{\mathcal{H}}(p)$ and it maps $\mathbb{U}$ onto a domain surrounded by $2p+m-1$ straight lines with $2p+m-1$ cusps. Indeed, setting $p=2$ and $m=2$, we know that
\begin{equation}
f(z)=\int_{0}^{z}\dfrac{2\zeta}{1+\zeta^5}d\zeta+\overline{\int_{0}^{z}\dfrac{2\zeta^{2}}{1+\zeta^5}d\zeta}
\end{equation}
is a $2$-valently harmonic function and it maps $\mathbb{U}$ onto a star as shown in Figure 3. Furthermore, if we take $p=1$ in $(\ref{specialf(z)})$, then we see that the function
\begin{equation}
f_{m+1}(z)=h(z)+\overline{g(z)}=\int_{0}^{z}\dfrac{1}{1+\zeta^{m+1}}d\zeta+\overline{\int_{0}^{z}\dfrac{\zeta^{m-1}}{1+\zeta^{m+1}}d\zeta}
\end{equation}
is univalent in $\mathbb{U}$ and it maps $\mathbb{U}$ onto a $(m+1)$-sided polygon. For example, the function
\begin{equation}
f_{8}(z)=\int_{0}^{z}\dfrac{1}{1+\zeta^{8}}d\zeta+\overline{\int_{0}^{z}\dfrac{\zeta^{6}}{1+\zeta^{8}}d\zeta}\qquad (m=7)
\end{equation}
maps $\mathbb{U}$ onto an octagon (schlicht domain) as shown in Figure 4.
\end{ex}

\begin{figure*}[h]
\begin{center}
\includegraphics[width=7cm,clip]{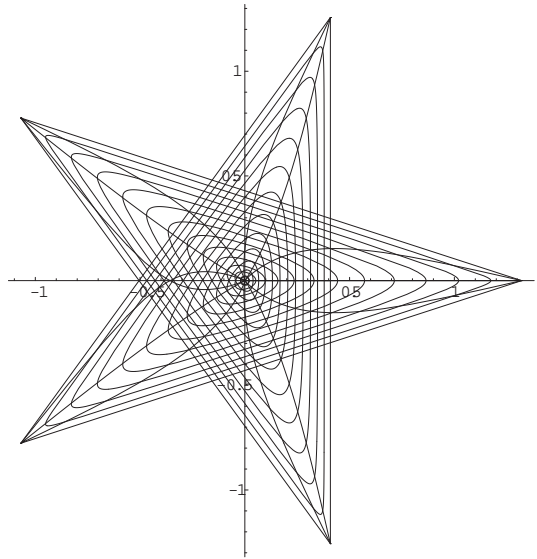}
\end{center}
\caption{The image of ${\displaystyle f(z)=\int_{0}^{z}\dfrac{2\zeta}{1+\zeta^5}d\zeta+\overline{\int_{0}^{z}\dfrac{2\zeta^{2}}{1+\zeta^5}d\zeta}}$.}
\end{figure*}

\

\begin{figure*}[h]
\begin{center}
\includegraphics[width=7cm,clip]{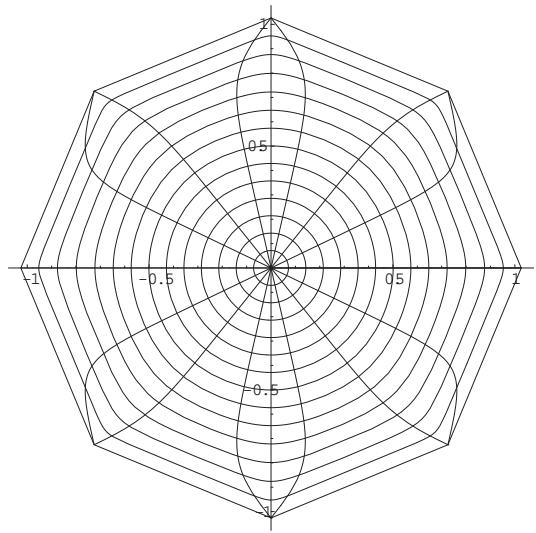}
\end{center}
\caption{The image of ${\displaystyle f_{8}(z)=\int_{0}^{z}\dfrac{1}{1+\zeta^{8}}d\zeta+\overline{\int_{0}^{z}\dfrac{\zeta^{6}}{1+\zeta^{8}}d\zeta}}$.}
\end{figure*}

\

\end{document}